\newcommand{\Qp}{\mathbf{Q}_p}
\newcommand{\Cp}{\mathbf{C}_p}
\newcommand{\Kalg}{\overline{K}}
\newcommand{\ZZ}{\mathbf{Z}}
\newcommand{\dbf}{\mathbf{d}}
\newcommand{\OO}{\mathcal{O}}
\newcommand{\MM}{\mathfrak{m}}
\newcommand{\wt}{\tilde{w}}
\newcommand{\val}{\operatorname{val}}
\newcommand{\Res}{\operatorname{Res}}
\newcommand{\Disc}{\operatorname{Disc}}
\newcommand{\ResPol}{\operatorname{ResPol}}
\newcommand{\wideg}{\operatorname{wideg}}
\newcommand{\dcroc}[1]{[\![ #1 ]\!]}
\renewcommand{\geq}{\geqslant}
\renewcommand{\leq}{\leqslant} 
\author{Laurent Berger}
\address{UMPA de l'ENS de Lyon \\
UMR 5669 du CNRS}
\email{laurent.berger@ens-lyon.fr}
\urladdr{perso.ens-lyon.fr/laurent.berger/}
\date{\today}
\title[Weierstrass preparation and resultants]{The Weierstrass preparation theorem and resultants of $p$-adic power series}
\begin{document}

\begin{abstract}
We define the resultant of two power series with coefficients in the ring of integers of a $p$-adic field. In order to do this, we prove a universal version of the Weierstrass preparation theorem.
\end{abstract}

\subjclass{11S15; 11S82; 13J05; 13P15; 26E30}

\keywords{$p$-adic analysis; Weierstrass preparation theorem; resultant; discriminant}

\maketitle

\tableofcontents

\setlength{\baselineskip}{18pt}

\section*{Introduction}

Given two polynomials $P$ and $Q$ with coefficients in a field $K$, the resultant $\Res(P,Q)$ allows us to determine whether $P$ and $Q$ have a common root in $\Kalg$. The resultant is a polynomial function of the coefficients of $P$ and $Q$, and $\Res(P,Q)=0$ if and only if $P$ and $Q$ have a common root.

In this article, we consider a similar question for $p$-adic power series. Let $K$ be a finite extension of $\Qp$, or more generally a finite totally ramified extension of $W(k)[1/p]$ where $k$ is a perfect field of characteristic $p$. Let $\OO_K$ denote the integers of $K$, let $\MM_K$ be the maximal ideal of $\OO_K$, let $k$ be the residue field of $\OO_K$, and let $\pi$ be a uniformizer of $\OO_K$. Let $\Cp$ be the completion of an algebraic closure $\Kalg$ of $K$, so that $\MM_{\Cp}$ is the $p$-adic open unit disk. A power series $f(X) =f_0+f_1X+ \cdots \in \OO_K\dcroc{X}$ defines a bounded holomorphic function on $\MM_{\Cp}$, and may have roots in this domain. Given two such power series, we would like to know whether they have a common root. The Weierstrass degree $\wideg(f)$ of $f$ is the smallest integer $n$ such that $f_n \in \OO_K^\times$, or $+\infty$ if there is no such integer. If $\wideg(f)=n$ is finite, then $f$ has precisely $n$ roots (counting multiplicities) in $\MM_{\Cp}$. Our main result is the following.

\begin{enonce*}{Theorem A}
\label{thmA}
For all $n \geq 1$, there exists a power series
\[ \Res_n(\{F_i\}_{i\geq 0},\{G_i\}_{i\geq 0}) \in \ZZ[F_n,F_n^{-1},\{F_k\}_{k \geq n+1},\{G_k\}_{k \geq 0}] \dcroc { F_0, \hdots, F_{n-1} } \] such that for all power series $f(X)$, $g(X) \in \OO_K\dcroc{X}$ with $\wideg(f)=n$, we have 
\[ \prod_{\substack{z \in \MM_{\Cp} \\  f(z)=0}} g(z) = \Res_n(\{f_i\}_{i\geq 0},\{g_i\}_{i\geq 0}). \]
In particular, $\Res_n(\{f_i\}_{i\geq 0},\{g_i\}_{i\geq 0})=0$ iff $f$ and $g$ have a common root in $\MM_{\Cp}$.
\end{enonce*}

Note that if $\wideg(f)=n$, then $f_0,\hdots,f_{n-1} \in \MM_K$ so that $\Res_n(\{f_i\}_{i\geq 0},\{g_i\}_{i\geq 0})$ converges. The classical resultant of two polynomials $P$ and $Q$ can be defined using either the product $\prod_{P(z)=0} Q(z)$ or the determinant of the Sylvester matrix. Both approaches give the same formula, after a suitable normalization. Theorem A follows the first approach. Is it possible to view $\Res_n$ as the (generalized) determinant of an operator on some $p$-adic Banach space?

The main technical tool for proving theorem A is the Weierstrass preparation theorem. We use a version due to O'Malley (see \cite{OM72}) which allows us to prove the following universal Weierstrass preparation theorem. Recall that if $R$ is a ring and $I$ is an ideal of $R$, a polynomial is said to be distinguished if it is monic and all its non-leading coefficients are in $I$. If $n \geq 1$, let $R_n = \ZZ[F_n,F_n^{-1},\{F_k\}_{k \geq n+1}] \dcroc { F_0,\hdots,F_{n-1} }$ and let $I_n$ be the ideal of $R_n$ generated by $F_0,\hdots,F_{n-1}$.

\begin{enonce*}{Theorem B}
\label{thmB}
We can write $F(X) = \sum_{i \geq 0} F_i X^i \in R_n \dcroc{X}$ as $F(X)=P(X)U(X)$ where $U(X) \in R_n \dcroc{X}^\times$ and $P(X) = X^n + P_{n-1} X^{n-1} + \cdots + P_0 \in R_n[X]$ is a distinguished polynomial for $I_n$. In addition, $P$ and $U$ are uniquely determined by $F$.
\end{enonce*}

Theorem B provides a universal Weierstrass preparation theorem, and the existence part of the classical versions follows by specializing. In particular, Theorem B shows how the coefficients of $p$ and $u$ depend on those of $f$ when we write a power series $f(X) \in \OO_K \dcroc{X}$ as the product of a distinguished polynomial $p$ and a unit $u$.

In \S\ref{lubinsec}, we give an application of our results to the iteration of power series in characteristic $p$. We show that such a power series admits a lift to characteristic zero satisfying certain properties, which strengthens a construction of Lubin (see \cite{JL95}).

We finish this article with a sketch of an analogue of our constructions that singles out the roots of a power series in a sphere, instead of in an open disk.

\section{A universal Weierstrass preparation theorem}
\label{weiersec}

The classical Weierstrass preparation theorem over $\OO_K$ (see for instance \cite{AC}, chapter VII, \S3, no 8) says that if $f(X) \in \OO_K \dcroc{X}$ and $\wideg(f) = n$, there exists a distinguished (for the ideal $\MM_K$) polynomial $p(X)$ of degree $n$ and a unit $u(X) \in \OO_K \dcroc{X}^\times$ such that $f=pu$. In addition, $p$ and $u$ are uniquely determined by $f$. The coefficients of $p$ and $u$ depend on those of $f$. In order to make this dependence more explicit, we use the following strengthening of the Weierstrass preparation theorem, which is theorem 2.10 of \cite{OM72}.

\begin{theo}
\label{weier}
Let $R$ be a ring, and take $f(X) = f_0+f_1X+ \cdots \in R \dcroc{X}$. Suppose that $f_n \in R^\times$ and that $R$ is separated and complete for the $(f_0,\hdots,f_{n-1})$-adic topology. 

There exists a distinguished (for $(f_0,\hdots,f_{n-1})$) polynomial $p(X)$ of degree $n$ and $u(X) \in R \dcroc{X}^\times$ such that $f=pu$. In addition, $p$ and $u$ are uniquely determined by $f$.
\end{theo}

Although we don't need this in the remainder of this article, we point out the following corollary of theorem \ref{weier}. Note that some even more general versions of the Weierstrass preparation theorem can be found, see for instance \cite{JE14}.

\begin{coro}
\label{Iadic}
Let $R$ be a ring and let $J$ be an ideal of $R$ such that $R$ is separated and complete for the $J$-adic topology. Take $f(X) = f_0+f_1X+ \cdots \in R \dcroc{X}$. Suppose that $f_n \in R^\times$ and that $f_0,\hdots,f_{n-1} \in J$. 

There exists a distinguished (for $J$) polynomial $p(X)$ of degree $n$ and $u(X) \in R \dcroc{X}^\times$ such that $f=pu$. In addition, $p$ and $u$ are uniquely determined by $f$.
\end{coro}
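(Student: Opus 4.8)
The plan is to deduce existence by specializing the universal factorization of Theorem~B, and to prove uniqueness by a direct recursion that uses only the separatedness of $R$. The point to keep in mind is that one cannot simply invoke Theorem~\ref{weier} for the pair $(R,(f_0,\dots,f_{n-1}))$: the hypothesis there is completeness for the $(f_0,\dots,f_{n-1})$-adic topology, which is \emph{finer} than the $J$-adic topology when $(f_0,\dots,f_{n-1}) \subsetneq J$, and $J$-adic completeness does not in general imply $(f_0,\dots,f_{n-1})$-adic completeness. This mismatch is the main obstacle. It is circumvented by the universal ring $R_n = \ZZ[F_n,F_n^{-1},\{F_k\}_{k\geq n+1}]\dcroc{F_0,\dots,F_{n-1}}$, which is the instance of Theorem~\ref{weier} where the factorization is carried out for the ideal $I_n = (F_0,\dots,F_{n-1})$ and $R_n$ is complete for it by construction; Theorem~B is precisely this instance.

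For existence, first I would build a ring homomorphism $\phi \colon R_n \to R$ sending $F_n \mapsto f_n$ (a unit, so $F_n^{-1}\mapsto f_n^{-1}$), $F_k \mapsto f_k$ for $k \geq n+1$, and $F_0,\dots,F_{n-1} \mapsto f_0,\dots,f_{n-1}$. On the polynomial part $\ZZ[F_n,F_n^{-1},\{F_k\}_{k\geq n+1}]$ this is immediate; to extend it over the power series variables $F_0,\dots,F_{n-1}$ one must check that the substituted series converge, and this is exactly where $J$-adic completeness enters. A monomial $f_0^{\alpha_0}\cdots f_{n-1}^{\alpha_{n-1}}$ lies in $J^{|\alpha|}$, so modulo each $J^m$ only finitely many terms survive and the image of any element of $R_n$ converges in the complete ring $R$; by the universal property of the $I_n$-adic completion this makes $\phi$ a ring homomorphism. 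Applying $\phi$ coefficientwise gives a map $R_n\dcroc{X} \to R\dcroc{X}$ carrying $F(X)=\sum_i F_i X^i$ to $f(X)$. Specializing the factorization $F=PU$ of Theorem~B then yields $f=pu$ with $u=\phi(U) \in R\dcroc{X}^\times$ (units map to units) and $p=\phi(P)$ monic of degree $n$; since $P$ is distinguished for $I_n$, its non-leading coefficients map into $(f_0,\dots,f_{n-1}) \subseteq J$, so $p$ is distinguished for $J$.

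For uniqueness, suppose $f=pu=p'u'$ with $p,p'$ both distinguished for $J$ and $u,u'$ units. Writing $v=u'u^{-1}$ gives $p=p'v$, and reducing modulo $J$ turns this into $X^n = X^n\bar v$ in $(R/J)\dcroc{X}$; since multiplication by $X^n$ is injective on power series, $\bar v=1$, that is, $h:=v-1 \in J\dcroc{X}$. The key step is that $p-p'=p'h$ has degree $\leq n-1$, so every coefficient of $p'h$ in degree $\geq n$ vanishes; comparing the coefficient of $X^{M+n}$ gives the recursion $h_M = -\sum_{i=0}^{n-1} p'_i\, h_{M+n-i}$ for all $M \geq 0$, with each $p'_i \in J$. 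Starting from $h_j \in J$ for all $j$ and iterating this recursion shows $h_M \in J^s$ for every $s$, whence $h=0$ by separatedness and $p=p'$. The same recursion applied to a relation $pg=0$ shows that $p$ is a non-zero-divisor in $R\dcroc{X}$ (after first reducing mod $J$ to see that $g \in J\dcroc{X}$), so $p(u-u')=0$ forces $u=u'$. I expect the recursion bookkeeping to be the only delicate point; note that completeness is used only for existence, while uniqueness rests on separatedness alone.
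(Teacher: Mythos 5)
Your factorization argument is correct, but the ``main obstacle'' on which your whole strategy is premised does not exist, and the paper's proof is exactly the route you rule out. The paper invokes theorem~\ref{weier} directly for the pair $(R,(f_0,\hdots,f_{n-1}))$, the completeness hypothesis being supplied by a lemma from the Stacks project (\cite{SP}, Tag 00M9): if $I \subset J$ are ideals of $R$ with $I$ \emph{finitely generated}, and $R$ is separated and complete for the $J$-adic topology, then $R$ is separated and complete for the $I$-adic topology. Finite generation is the hypothesis you overlooked: here $I=(f_0,\hdots,f_{n-1})$ is generated by $n$ elements, so the lemma applies. (Sketch: separatedness is clear from $I^m \subset J^m$; for completeness, an $I$-adic Cauchy sequence has a $J$-adic limit $x$, and writing each difference $x_{m+1}-x_m \in I^m$, for $m \geq s$, as a finite sum $\sum_{|\alpha|=s} f^\alpha c_{m,\alpha}$ with $c_{m,\alpha} \in J^{m-s}$, one can sum the $c_{m,\alpha}$ $J$-adically and conclude that $x-x_s \in I^s$.) Your instinct that completeness for a coarser adic topology need not imply completeness for a finer one is only a genuine worry for ideals that are not finitely generated, which is not the case here.

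That said, your proof is a valid alternative and genuinely different from the paper's. Existence by specializing theorem~\ref{weierexpl} along the ring homomorphism $\phi \colon R_n \to R$ (which exists by the universal property of the $(F_0,\hdots,F_{n-1})$-adic completion, since $\phi$ sends that ideal into $J$ and $R$ is $J$-adically separated and complete) is sound, as is the uniqueness recursion $h_M = -\sum_{i=0}^{n-1} p'_i\, h_{M+n-i}$, which has the small extra virtue of showing that uniqueness needs only separatedness, not completeness. Two remarks. First, your last step is redundant: once $h=0$ you have $v=u'u^{-1}=1$, hence $u=u'$ immediately, so the non-zero-divisor argument for $p$ is not needed. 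Second, as to what each approach buys: the paper's proof is two lines, delegating everything to O'Malley's theorem plus the completeness lemma, while yours is self-contained modulo Theorem~B, at the cost of redoing uniqueness by hand and of asserting, incorrectly, that the short route is unavailable.
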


\begin{proof}
This follows from theorem \ref{weier}, and the following assertion (\cite{SP}, Tag 00M9, lemma 10.95.8): if $I \subset J$ are two ideals of a ring $R$, with $I$ finitely generated, and if $R$ is separated and complete for the $J$-adic topology, then $R$ is separated and complete for the $I$-adic topology.
\end{proof}

If $n \geq 1$ is fixed,  we can consider the variables $\{F_i\}_{i \geq 0}$ and we define 
\[ R_n = \ZZ[F_n,F_n^{-1},\{F_k\}_{k \geq n+1}] \dcroc { F_0,\hdots,F_{n-1} }. \]
The ring $R_n$ is separated and complete for the $(F_0,\hdots,F_{n-1})$-adic topology, and the following result (Theorem B) is an immediate consequence of theorem \ref{weier}.

\begin{theo}
\label{weierexpl}
We can write $F(X) = \sum_{i \geq 0} F_i X^i \in R_n \dcroc{X}$ as $F(X)=P(X)U(X)$ where $U(X) \in R_n \dcroc{X}^\times$ and $P(X) = X^n + P_{n-1} X^{n-1} + \cdots + P_0 \in R_n[X]$ is a distinguished polynomial. In addition, $P$ and $U$ are uniquely determined by $F$.
\end{theo}

\begin{exem}
\label{bgw}
We give an explicit formula for $P(X)$ in theorem \ref{weierexpl} when $n=1$.

If $n=1$, then $P(X)=X+P_0$ and $P_0 \in R_1 = \ZZ[F_1,F_1^{-1},\{F_k\}_{k \geq 2}] \dcroc {F_0} $ is given by the following formula (proposition 2.2 of \cite{BGW14}):
\[ P_0 = \sum_{n \geq 0} F_0^{n+1} \sum_{j=0}^n (-F_1)^{-n-j} \sum_{\substack{i_1+i_2 + \cdots + i_n = j \\ i_1+2i_2 + \cdots + n i_n = n}} \frac{(n+j)!}{(n+1)! i_1! i_2! \cdots i_n!} F_2^{i_1} F_3^{i_2} \cdots F_{n+1}^{i_n}. \]
(We have $(n+j)!/(n+1)! i_1! i_2! \cdots i_n! \in \ZZ$ if $i_1+i_2 + \cdots + i_n = j$ and $i_1+2i_2 + \cdots + n i_n = n$; indeed, $(n+j)!/(n+1)! i_1! i_2! \cdots i_n!$ becomes a multinomial coefficient and hence an integer if we replace either $n+1$ by $n$ or $i_k$ by $i_k-1$ for some $k$.
If $\ell$ is a prime number, then it cannot divide both $n+1$ and all of the $i_k$. Hence $(n+j)!/(n+1)! i_1! i_2! \cdots i_n!$ is a rational number that is $\ell$-integral for every prime number $\ell$, and is therefore an integer).
\end{exem}

\section{Resultants and discriminants of $p$-adic power series}
\label{resulsec}

By the theory of Newton polygons, a distinguished polynomial $p(X) = X^n + p_{n-1} X^{n-1} + \cdots + p_0 \in \OO_K[X]$ of degree $n$ has precisely $n$ roots in $\MM_{\Cp}$ (counting multiplicities). Let $p(X)$ be such a polynomial. If $g(X) = \sum_{i \geq 0} g_i X^i \in \OO_K\dcroc{X}$, we can consider $\prod_{p(z)=0} g(z)$.

\begin{prop}
\label{respoly}
There exists a power series \[ \ResPol_n(P_0,\hdots,P_{n-1}, \{G_k\}_{k \geq 0}) \in \ZZ[\{G_k\}_{k \geq 0}] \dcroc{P_0,\hdots,P_{n-1}} \] such that for all
$g(X) = \sum_{i \geq 0} g_i X^i \in \OO_K\dcroc{X}$ and all distinguished polynomial $p(X) = X^n + p_{n-1} X^{n-1} + \cdots + p_0 \in \OO_K[X]$ of degree $n$, we have
\[ \prod_{p(z)=0} g(z) = \ResPol_n(p_0,\hdots,p_{n-1},\{g_k\}_{k \geq 0}). \]
\end{prop}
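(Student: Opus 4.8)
The plan is to realize the product as a symmetric function of the roots of $p$ and to invoke the fundamental theorem of symmetric functions in its isobaric (weight-graded) form, which is exactly what lets the infinitely many terms coming from the power series $g$ assemble into a genuine element of $\ZZ[\{G_k\}_{k\geq 0}]\dcroc{P_0,\hdots,P_{n-1}}$. First I would introduce formal universal roots $T_1,\hdots,T_n$ and set $P(X) = \prod_{i=1}^n (X-T_i) = X^n + P_{n-1}X^{n-1}+\cdots+P_0$, so that $P_{n-j}=(-1)^j e_j(T_1,\hdots,T_n)$, where $e_j$ denotes the $j$-th elementary symmetric polynomial. The candidate object is the symmetric power series
\[ H(T_1,\hdots,T_n) = \prod_{i=1}^n \Big( \sum_{k \geq 0} G_k T_i^k \Big) \in \ZZ[\{G_k\}_{k\geq 0}]\dcroc{T_1,\hdots,T_n}, \]
and the goal is to rewrite $H$ as a power series in the variables $P_0,\hdots,P_{n-1}$; this rewriting will be the definition of $\ResPol_n$.

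The crux is the well-definedness of this rewriting, and it is handled by a weight argument. Assign weight $1$ to each $T_i$; then $e_j$, and hence $P_{n-j}$, is isobaric of weight $j$, so $P_k$ has weight $n-k$. The homogeneous component $H_d$ of $H$ of total degree $d$ in the $T_i$ is a finite symmetric polynomial with coefficients in $\ZZ[\{G_k\}_{k\geq 0}]$ (only the tuples $(k_1,\hdots,k_n)$ with $k_1+\cdots+k_n=d$ contribute), so by the fundamental theorem of symmetric functions it equals $Q_d(e_1,\hdots,e_n)$ for a polynomial $Q_d$ that is isobaric of weight $d$ and has coefficients in $\ZZ[\{G_k\}_{k\geq 0}]$. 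Summing over $d$ and substituting $e_j=(-1)^jP_{n-j}$ gives a formal expression in the $P_k$; the key point is that a fixed monomial in $P_0,\hdots,P_{n-1}$ of weight $w$ can arise only from $Q_w$, so each monomial receives a contribution from exactly one value of $d$, and that contribution is a single well-defined element of $\ZZ[\{G_k\}_{k\geq 0}]$. This is precisely where infinite sums are avoided, and it produces an element $\ResPol_n \in \ZZ[\{G_k\}_{k\geq 0}]\dcroc{P_0,\hdots,P_{n-1}}$ together with the formal identity stating that $\ResPol_n$, evaluated at $P_{n-j}=(-1)^je_j(T_1,\hdots,T_n)$, equals $H$.

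Finally I would specialize to conclude. Given $g$ and a distinguished $p$ of degree $n$ over $\OO_K$, let $z_1,\hdots,z_n\in\MM_{\Cp}$ be the roots of $p$; then $(-1)^je_j(z_1,\hdots,z_n)=p_{n-j}$, so the substitution $T_i\mapsto z_i$, $G_k\mapsto g_k$ carries $P_{n-j}$ to $p_{n-j}$. On the left this substitution gives $\prod_{i=1}^n g(z_i)=\prod_{p(z)=0}g(z)$, the product converging since $|z_i|<1$ and $g_k\in\OO_K$. On the right it gives $\ResPol_n(p_0,\hdots,p_{n-1},\{g_k\})$, which converges because $\val(p_k)>0$ for every $k$ forces the valuation of a monomial of total degree $d$ in the $p_k$ to be at least $cd$ for some $c>0$, while its coefficient lies in $\OO_K$; hence only finitely many monomials have valuation below any given bound. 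Since substitution is continuous on power series whose variables are specialized to elements of absolute value $<1$, the formal identity descends to the desired equality. As already flagged, the only real obstacle is the well-definedness step of the second paragraph: guaranteeing that passing from the $T$-expansion to the $P$-expansion introduces no infinite sums into the coefficients of $\ResPol_n$, which the isobaric grading is exactly designed to ensure.
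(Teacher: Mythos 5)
Your proposal is correct and takes essentially the same route as the paper's proof: introduce universal roots, expand $\prod_{i=1}^n g(T_i)$, invoke the fundamental theorem of symmetric polynomials, use a weight/degree argument to see that the rewriting in terms of $P_0,\hdots,P_{n-1}$ defines a genuine element of $\ZZ[\{G_k\}_{k\geq 0}]\dcroc{P_0,\hdots,P_{n-1}}$, and conclude by specialization. The only (cosmetic) difference is that the paper groups the expansion by monomial symmetric functions $Z_\dbf$ and phrases convergence as a lower bound on the degree in the $P_j$'s, namely at least $(d_1+\cdots+d_n)/n$, while you group by homogeneous degree in the roots and use the exact isobaric weight to see that each $P$-monomial receives a contribution from a single degree.
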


\begin{proof}
Let $Z_1,\hdots,Z_n$ denote $n$ variables. For each $n$-uple $\dbf = (d_1,\hdots,d_n) \in \ZZ_{\geq 0}$ with $d_1 \leq \cdots \leq d_n$, let $Z_\dbf = \sum_{(e_1,\hdots,e_n)} Z_1^{e_1} \cdots Z_n^{e_n}$ where $(e_1,\hdots,e_n)$ ranges over all distinct permutations of $(d_1,\hdots,d_n)$.
There exists polynomials $S_{\dbf} \in \ZZ[\{G_k\}_{k \geq 0}]$ for each $\dbf$, such that
\[ \prod_{i=1}^n \sum_{k \geq 0} G_k Z_i^k = \sum_{\dbf} S_\dbf(\{G_k\}_{k \geq 0}) Z_\dbf. \]

If we write $\prod_{i=1}^n (X-Z_i) = X^n + P_{n-1} X^{n-1} + \cdots + P_0$, then by the fundamental theorem of symmetric polynomials, each $Z_\dbf$ belongs to $\ZZ[P_0,\hdots,P_{n-1}]$. We set 
\[ \ResPol_n = \sum_{\dbf} S_\dbf(\{G_k\}_{k \geq 0}) Z_\dbf \in \ZZ[\{G_k\}_{k \geq 0}] \dcroc{P_0,\hdots,P_{n-1}}. \] 
Note that the total degree of $Z_\dbf$ is $d_1+\cdots+d_n$ so that the degree of $Z_\dbf$ as an element of $\ZZ[P_0,\hdots,P_{n-1}]$ is at least $(d_1+\cdots+d_n)/n$. Therefore, the above sum converges for the $(P_0,\hdots,P_{n-1})$-adic topology. The proposition follows by specializing.
\end{proof}

We can now prove Theorem A.

\begin{theo}
\label{resul}
There exists a power series \[ \Res_n(\{F_i\}_{i\geq 0},\{G_i\}_{i\geq 0}) \in \ZZ[F_n,F_n^{-1},\{F_k\}_{k \geq n+1},\{G_k\}_{k \geq 0}] \dcroc { F_0, \hdots, F_{n-1} } \] such that for all power series $f(X)$, $g(X) \in \OO_K\dcroc{X}$ with $\wideg(f)=n$, we have 
\[ \prod_{\substack{z \in \MM_{\Cp} \\ f(z)=0}} g(z) = \Res_n(\{f_i\}_{i\geq 0},\{g_i\}_{i\geq 0}). \]
\end{theo}

\begin{proof}
By theorem \ref{weierexpl}, we can write $F(X) = \sum_{i \geq 0} F_i X^i$ as $F(X)=P(X)U(X)$ with $P(X)=X^n+P_{n-1} X^{n-1} + \cdots +P_0$, where each $P_i$ belongs to the ideal $(F_0,\hdots,F_{n-1})$ of the ring $\ZZ[F_n,F_n^{-1},\{F_k\}_{k \geq n+1}] \dcroc { F_0,\hdots,F_{n-1} }$. The result follows from proposition \ref{respoly}, by setting $\Res_n= \ResPol_n(P_0,\hdots,P_{n-1}, \{G_k\}_{k \geq 0})$.
\end{proof}

If $f, g \in \OO_K\dcroc{X}$ and $\wideg(f)=n$, we write $\Res_n(f,g)$ instead of $\Res_n(\{f_i\}_{i\geq 0},\{g_i\}_{i\geq 0})$ in order to lighten the notation.

\begin{rema}
\label{resprod}
We have $\Res_n(f,gh)=\Res_n(f,g)\Res_n(f,h)$.
\end{rema}

\begin{defi}
\label{defdisc}
We define $\Disc_n$ to be the power series \[ \Disc_n(\{F_i\}_{i\geq 0}) = \Res_n(F,F') \in \ZZ[F_n,F_n^{-1},\{F_k\}_{k \geq n+1}] \dcroc { F_0, \hdots, F_{n-1} }, \] and likewise write $\Disc_n(f)$ instead of $\Disc_n(\{f_i\}_{i\geq 0})$.
\end{defi}

By theorem \ref{resul}, a power series $f(X) \in \OO_K\dcroc{X}$ with $\wideg(f)=n$ has only simple roots in $\MM_{\Cp}$ if and only if $\Disc_n(f) \neq 0$.

\begin{prop}
\label{opensimple}
The set of elements of $\OO_K \dcroc{X}$ having only simple roots in $\MM_{\Cp}$ is open in the $p$-adic topology.
\end{prop}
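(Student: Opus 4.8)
The plan is to split $\OO_K \dcroc{X}$ according to the Weierstrass degree and to detect simple roots through the discriminant $\Disc_n$ of Definition~\ref{defdisc}; throughout I write $S$ for the set in the statement, and I use that the $p$-adic and $\pi$-adic topologies coincide, so that the sets $f + \pi^N \OO_K \dcroc{X}$ with $N \geq 1$ form a basis of neighborhoods of $f$. First I would observe that $\wideg$ is locally constant: whether $\wideg(g)=n$ depends only on the $g_i$ modulo $\MM_K$, so if $g \equiv f \pmod{\pi \OO_K \dcroc{X}}$ then $\wideg(g)=\wideg(f)$. Hence each $W_n := \{\,g : \wideg(g)=n\,\}$ and $W_\infty := \{\,g : \wideg(g)=+\infty\,\} = \MM_K \dcroc{X}$ is open and closed, and it is enough to prove that $S$ is open inside each of these pieces.

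On $W_n$ with $n$ finite, an element $g$ has only simple roots if and only if $\Disc_n(g) \neq 0$, by Theorem~\ref{resul} and Definition~\ref{defdisc}; since $\OO_K \setminus \{0\}$ is open, it suffices to show that $g \mapsto \Disc_n(g)$ is continuous on $W_n$. Writing $\Disc_n = \sum_\alpha c_\alpha F_0^{\alpha_0}\cdots F_{n-1}^{\alpha_{n-1}}$ with $c_\alpha \in \ZZ[F_n^{\pm 1}, \{F_k\}_{k \geq n+1}]$, I would use that $g_0,\dots,g_{n-1} \in \MM_K$ and $g_n \in \OO_K^\times$ for $g \in W_n$. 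If $g \equiv g' \pmod{\pi^N}$, then every monomial with $|\alpha| \geq N$ evaluates into $\pi^N \OO_K$ for both $g$ and $g'$, while for each of the finitely many $\alpha$ with $|\alpha| < N$ one gets $c_\alpha(g)\prod_i g_i^{\alpha_i} \equiv c_\alpha(g')\prod_i (g'_i)^{\alpha_i} \pmod{\pi^N}$ (here one uses that $g_n^{-1} \equiv (g'_n)^{-1} \pmod{\pi^N}$, since $g_n,g'_n$ are congruent units). Summing yields $\Disc_n(g) \equiv \Disc_n(g') \pmod{\pi^N}$, so $\Disc_n$ is non-expanding, in particular continuous.

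On $W_\infty$ I would reduce to the finite case. If $f=0$ there is nothing to prove, as $0 \notin S$. If $f \neq 0$, set $v = \min_i \val(f_i) \geq 1$ and let $m$ be least with $\val(f_m)=v$; then $f = \pi^v \tilde f$ with $\wideg(\tilde f)=m$, and $f$ and $\tilde f$ have the same roots in $\MM_{\Cp}$. For $g \equiv f \pmod{\pi^N}$ with $N > v$, a short valuation computation shows that every coefficient of $g$ still has valuation $\geq v$, so $g = \pi^v \tilde g$ with $\tilde g \equiv \tilde f \pmod{\pi^{N-v}}$, $\wideg(\tilde g)=m$, and $g,\tilde g$ sharing their roots. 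Thus $g \mapsto \tilde g$ is a continuous map of a neighborhood of $f$ into $W_m$ preserving the simple-root locus, and the openness at $\tilde f$ obtained in the previous step transfers to openness at $f$.

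The step I expect to be the main obstacle is the continuity of $\Disc_n$: because $\Disc_n$ lives in a ring of power series over a polynomial ring in infinitely many variables, one must rule out that the tail $|\alpha| \geq N$ or the variables $F_k$ with $k \geq n+1$ spoil the congruence. What makes the argument go through is that the power-series part involves only the $n$ topologically nilpotent variables $F_0,\dots,F_{n-1}$, so there are finitely many monomials of each total degree, and that each $c_\alpha$ is a genuine polynomial, continuous on $W_n$ precisely because $F_n$ is inverted only at units.
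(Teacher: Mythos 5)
Your proof is correct and takes essentially the same route as the paper: detect simple roots via $\Disc_n \neq 0$ (theorem \ref{resul}), use the fact that $\Disc_n$ lies in $\ZZ[F_n,F_n^{-1},\{F_k\}_{k \geq n+1}] \dcroc{F_0,\hdots,F_{n-1}}$ to get the congruence $\Disc_n(g) \equiv \Disc_n(g') \bmod{\pi^N}$ whenever $g \equiv g' \bmod{\pi^N}$ (this is exactly the paper's one-line claim that $\val_\pi \Disc_n(f+\pi^{v+1}h)=v$), and reduce the infinite Weierstrass degree case to the finite one by dividing by a power of $\pi$. Your clopen decomposition by Weierstrass degree and your explicit verification of the non-expansiveness of $\Disc_n$ are just expanded, more carefully organized versions of what the paper compresses into its short argument.
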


\begin{proof}
Take $f(X) \in \OO_K \dcroc{X}$ having only simple roots in $\MM_{\Cp}$. We can divide $f$ by an appropriate power of $\pi$ and assume that $\wideg(f)$ is finite. Let $n = \wideg(f)$ and $v = \val_\pi \Disc_n(f)$. 

The fact that $\Disc_n$ belongs to $\ZZ[F_n,F_n^{-1},\{F_k\}_{k \geq n+1}] \dcroc { F_0, \hdots, F_{n-1} }$ implies that for every $h(X) \in \OO_K \dcroc{X}$, we have $\val_\pi \Disc_n(f+\pi^{v+1} h)=v$, so that if $g(X) \in \OO_K \dcroc{X}$ is such that $\val_\pi(f-g) \geq v+1$, then $g(X)$  has only simple roots in $\MM_{\Cp}$.
\end{proof}

Note that the set of elements of $\OO_K \dcroc{X}$ having only simple roots in $\MM_{\Cp}$ is also dense in the $p$-adic topology. If $f=pu$ with $p$ distinguished having multiple roots, then $p$ can be approached by distinguished polynomials having only simple roots. Indeed, the (usual) discriminant of $p(X)=X^n+p_{n-1} X^{n-1} + \cdots +p_0$ is a polynomial $\Delta(p_0,\hdots,p_{n-1})$ and its zero set is closed with empty interior.

\section{Lubin's proof of Sen's theorem on iteration of power series}
\label{lubinsec}

In this section, we give an application of the above constructions. In his paper \cite{JL95}, Lubin gives a short and very nice proof of Sen's theorem on iteration of power series. We start by recalling Sen's theorem and Lubin's argument. Recall that $k$ is the residue field of $\OO_K$. If $w(X) = X + \sum_{i \geq 2} w_i X^i \in k \dcroc{X}$, let $i(w)=m-1$ where $m$ is the smallest integer $\geq 2$ such that $w_m \neq 0$ (or $+\infty$ if there is no such integer). For $n \geq 0$, let $i_n(w) = i(w^{\circ p^n})$. Sen's theorem (theorem 1 of \cite{S69}) says that $i_{n-1}(w) \equiv i_n(w) \bmod{p^n}$ for all $n \geq 1$ (where the congruence holds automatically if one side is $+\infty$). 

Lubin's argument is to show that for each $n \geq 0$ such that $i_n(w) \neq +\infty$, there exists a finite extension $L$ of $K$ and a power series $f_n(X) \in X \cdot \OO_L \dcroc{X}$ such that the image of $f_n(X)$ in $k_L\dcroc{X}$ is $w(X)$ and such that all the roots of $f_n^{\circ p^n}(X)-X$ in $\MM_{\Cp}$ are simple. We then have $i_n(w)-i_{n-1}(w) = \wideg( (f_n^{\circ p^n}(X)-X)/(f_n^{\circ p^{n-1}}(X)-X) )$, so that $i_n(w)-i_{n-1}(w)$ is the number of points of $\MM_{\Cp}$ whose orbit under $f_n$ is of cardinality $p^n$. This number is clearly divisible by $p^n$, which implies Sen's theorem.

Using our methods, we can improve Lubin's result. We prove that there is one lift $f$ of $w$ that works for all $n$, and has coefficients in $\OO_K$.

\begin{theo}
\label{goodlift}
Take $w(X) = X + \sum_{i \geq 2} w_i X^i \in k \dcroc{X}$ and let $N \subset \ZZ_{\geq 0}$ be the set of $n$ such that $i_n(w)$ is finite. There exists $f(X) \in X \cdot \OO_K \dcroc{X}$ whose image in $k \dcroc{X}$ is $w(X)$ and such that for all $n \in N$, the roots of $f^{\circ p^n}(X)-X$ in $\MM_{\Cp}$ are simple.
\end{theo}

\begin{proof}
Let $W$ be the set of $f(X) \in X \cdot \OO_K \dcroc{X}$ whose image in $k \dcroc{X}$ is $w(X)$, and let $W_n$ be the set of elements of $W$ such that the roots of $f^{\circ p^n}(X)-X$ in $\MM_{\Cp}$ are simple. We prove that if $n \in N$, then $W_n$ is open and dense in $W$ for the $p$-adic topology. Since $W$ is a complete metric space, the theorem follows from this assertion and Baire's theorem, which implies that $\cap_{n \in N} W_n$ is dense in $W$ and hence non-empty.

Fix an element $\wt \in W$. We have $W = \{ \wt + h, h \in \pi X \cdot \OO_K \dcroc{X}\}$. If $F(X) = \sum_{j \geq 1} F_j X^j$, write $F^{\circ p^n}(X) - X = \sum_{j \geq 1} F_j^{(n)} X^j$. Take $n \in N$ and let $i=i_n(w)+1$. Let $F(X) = \sum_{j \geq 1} (\wt_j + H_j) X^j$, where $\{H_j\}_{j \geq 1}$ are variables. For all $j \geq 1$, $F_j^{(n)} \in \OO_K[H_1,\hdots,H_j]$. Since $F_i^{(n)}(0) = \wt_i^{(n)} \in \OO_K^\times$,  $F_i^{(n)}$ has an inverse $(F_i^{(n)})^{-1} \in \OO_K \dcroc{H_1,\hdots,H_i}$. If $j \leq i-1$, then $\wt_j^{(n)} \in \MM_K$, and so $F_j^{(n)}$ is in the ideal $(\pi,H_1,\hdots,H_j)$ of $\OO_K[H_1,\hdots,H_j]$. The power series \[ \Disc_i(F^{\circ p^n}(X) - X) \in \ZZ[F_i^{(n)} , (F_i^{(n)})^{-1},\{F_j^{(n)}\}_{j \geq i+1}] \dcroc{F_1^{(n)},\hdots,F_{i-1}^{(n)}} \] therefore gives rise to an element $D_n(\{H_j\}_{j \geq 1}) \in \OO_K[\{H_j\}_{j \geq i+1}] \dcroc{H_1,\hdots,H_i}$.

Let us first show that $W_n$ is open in $W$. If $f = \wt + h \in W_n$, with $h \in \pi X \cdot \OO_K \dcroc{X}$, then $D_n(h) \neq 0$ by definition. If $v = \val_\pi(D_n(h))$ and $g(X) \in X \cdot \OO_K \dcroc{X}$, then $D_n(h+ \pi^{v+1} g) \equiv D_n(h) \bmod{\pi^{v+1}}$ so that $\val_\pi( D_n(h+ \pi^{v+1} g) ) = v$. Hence $f + h' \in W_n$ for all $h' \in \pi X \cdot \OO_K \dcroc{X}$ such that $\val_\pi(h-h') \geq v+1$, and therefore $W_n$ is open in $W$.

We now show that $W_n$ is dense in $W$. If this is not the case, its complement has non-empty interior. Suppose therefore that there exists $f = \wt + h \in W$ and $v \geq 1$ such that $D_n(h + \pi^v g) = 0$ for all $g \in X \cdot \OO_K \dcroc{X}$. We can write $D_n(\{H_j\}_{j \geq 1}) = \sum_{\dbf \in \ZZ_{\geq 0}^{i-1}} P_\dbf( \{H_j\}_{j \geq i+1} ) H_1^{d_1} \cdots H_i^{d_i}$ where $\dbf=(d_1,\hdots,d_i)$ and the $P_\dbf$ are polynomials with coefficients in $\OO_K$. The fact that $D_n(h + \pi^v g) = 0$ for all $g \in X \cdot \OO_K \dcroc{X}$ implies that for all fixed values of $\{g_j\}_{j \geq i+1}$, the corresponding power series in $H_1,\hdots,H_i$ is zero on the  set $(h_1,\hdots,h_i)+\pi^v \OO_K^i$. It is therefore the zero power series. This in turn implies that for each $\dbf$, the polynomial $P_\dbf( \{H_j\}_{j \geq i+1} )$ is zero on the set $\{ (h_j + \pi^v \OO_K) \}_{j \geq i+1}$ and therefore $P_\dbf=0$. 

This implies that $D_n$ is the zero power series, and therefore that for any extension $L/K$ and any power series $f(X) \in X \cdot \OO_L \dcroc{X}$ such that $\wideg(f^{\circ p^n}(X)-X)=i$, the roots of $f^{\circ p^n}(X)-X$ in $\MM_{\Cp}$ are not simple. This is in contradiction with Lubin's result in \cite{JL95} (the aforementioned construction of the power series $f_n$).
\end{proof}

\section{A universal Hensel factorization theorem}

In this section, we sketch an analogue of our constructions that singles out the roots of a power series in a sphere $\{ z \in \Cp, |z|=r\}$ instead of in an open disk as in \S\ref{weiersec} and \S\ref{resulsec}. Let $\OO_K\{X\}$ denote the ring of restricted power series (power series $f(X)=\sum_{n \geq 0} f_n X^n$ with $f_n \in \OO_K$ and $f_n \to 0$ as $n \to +\infty$). An element of $\OO_K\{X\}$ converges on the closed unit disk $\{ z \in \Cp, |z| \leq 1 \}$. We are interested in the roots of $f$ in the unit sphere $\{ z \in \Cp, |z|=1\}$. Take $f(X) = \sum_{n \geq 0} f_n X^n \in \OO_K\{X\}$, one of whose coefficients is in $\OO_K^\times$. Let $\mu_{\min}(f) = \min \{ i \geq 0, f_i \in \OO_K^\times\}$ and let $\mu_{\max}(f) = \max \{ i \geq 0, f_i \in \OO_K^\times\}$. If $n = \mu_{\min}(f)$ and $n+d = \mu_{\max}(f)$, we have the factorization $\overline f = \overline p \cdot \overline u$ in $k [X]$, with 
\[ \overline p (X) =  \overline f_{n+d}^{-1} \cdot (\overline f_n + \overline f_{n+1} X + \cdots + \overline f_{n+d} X^d) \text{ and } \overline u(X) = \overline f_{n+d} \cdot X^n. \]
Hensel's factorization theorem (\cite{AC}, chapter III, \S 4, no 3) implies that there exist $p(X) \in \OO_K[X]$ and $u(X) \in \OO_K\{X\}$ such that $f=pu$, the polynomial $p$ is monic of degree $d$,  $p(0) \in \OO_K^\times$, and $\mu_{\max}(u)=\mu_{\min}(u)$. This analogue of the Weierstrass preparation theorem, along with the theory of Newton polygons,  implies that $f$ has precisely $\mu_{\max}(f)-\mu_{\min}(f)$ roots (counting multiplicities) in the unit sphere.

Let $\{F_i\}_{i \geq 0}$ be variables, take $n,d \geq 0$, and let 
\[ S_{n,d} = \ZZ[\{F_{n+j}\}_{0 \leq j \leq d},F_n^{-1},F_{n+d}^{-1}] \dcroc{ F_0,\hdots,F_{n-1}, \{ F_{n+d+k} \}_{k \geq 1} }. \]
Our definition of a power series ring in infinitely many variables is the ``large'' one (for instance, $\sum_{k \geq 0} F_k$ belongs to $S_{n,d}$), see chapter IV, \S 4 of \cite{ALG}. Let $I_{n,d}$ be the ideal of $S_{n,d}$ generated by $F_0,\hdots,F_{n-1}, \{ F_{n+k} \}_{k \geq 1}$. The following result is a universal Hensel factorization theorem.

\begin{theo}
\label{hensuniv}
We can write $F(X) = \sum_{i \geq 0} F_i X^i \in S_{n,d} \dcroc{X}$ as $F(X)=P(X)U(X)$ where $P(X) \in S_{n,d}[X]$ is monic of degree $d$, $P(0) \in S_{n,d}^\times$, and $U(X) \equiv F_{n+d} X^n \bmod{I_{n,d}}$. In addition, $P$ and $U$ are uniquely determined by $F$.
\end{theo}

\begin{proof}
The ring $S_{n,d}$ is separated and complete for the $I_{n,d}$-adic topology. The polynomials $\overline P (X) = F_{n+d}^{-1} \cdot (F_n + F_{n+1} X + \cdots + F_{n+d} X^d)$ and $\overline U (X) = F_{n+d} \cdot X^n$ generate the unit ideal in $S_{n,d}/I_{n,d} [X]$, since $F_n,F_{n+d} \in S_{n,d}^\times$. Indeed, a descending induction on $n-1 \geq m \geq 0$ shows that $X^m \in (\overline P, \overline U)$ by considering $X^m \overline P$.
 
The theorem therefore results from Hensel's factorization theorem (see \cite{AC}, chapter III, \S 4, no 3, and the discussion at the beginning of no 5 of ibid.).
\end{proof}

\begin{coro}
\label{slopeuniv}
Given $F(X) \in \OO_K[X]$, one of whose coefficients is in $\OO_K^\times$, there are universal formulas, depending only on $\mu_{\min}(F)$, $\mu_{\max}(F)$ and $\deg(F)$, for the coefficients of the slope $0$ factor of $F$ in its slope factorization, in terms of the coefficients of $F$.
\end{coro}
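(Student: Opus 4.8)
The plan is to deduce the corollary from the universal Hensel factorization of Theorem~\ref{hensuniv} by a specialization argument, exactly as Theorem~\ref{resul} was deduced from Theorem~\ref{weierexpl}. First I would write the given polynomial as $F(X)=\sum_{i=0}^N f_i X^i$ with $f_i\in\OO_K$ and record its invariants $n=\mu_{\min}(F)$, $n+d=\mu_{\max}(F)$ and $N=\deg(F)$. Applying Theorem~\ref{hensuniv} to these $n$ and $d$ would give a factorization $\sum_{i\geq0}F_iX^i=P(X)U(X)$ over $S_{n,d}\dcroc{X}$ with $P(X)=X^d+P_{d-1}X^{d-1}+\cdots+P_0$ monic of degree $d$, $P(0)\in S_{n,d}^\times$, and $U(X)\equiv F_{n+d}X^n\bmod I_{n,d}$. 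Each coefficient $P_j$ is then a power series in $F_0,\hdots,F_{n-1},\{F_{n+d+k}\}_{k\geq1}$ with coefficients in $\ZZ[F_n,\hdots,F_{n+d},F_n^{-1},F_{n+d}^{-1}]$, and these $P_j$ are the universal formulas I am after; they depend only on $n$ and $d$.

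Next I would specialize $F_i\mapsto f_i$. Since $\mu_{\min}(F)=n$ and $\mu_{\max}(F)=n+d$, the coefficients $f_n,f_{n+d}$ lie in $\OO_K^\times$ while $f_0,\hdots,f_{n-1}$ and all $f_j$ with $j>n+d$ lie in $\MM_K$ (those with $j>N$ being zero). Hence $F_i\mapsto f_i$ (and $F_i\mapsto0$ for $i>N$) would send the invertible variables to units and every generator of $I_{n,d}$ into $\MM_K$, so it would define a continuous $\ZZ$-algebra homomorphism $\varphi\colon S_{n,d}\to\OO_K$; as $\OO_K$ is $\MM_K$-adically complete, every element of the large power series ring $S_{n,d}$ would have a well-defined image, so the $P_j$ would converge after substitution. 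Applying $\varphi$ coefficientwise to $\sum_iF_iX^i=PU$ would then yield a factorization $F=p\,u$ in $\OO_K\dcroc{X}$ with $p=\varphi(P)$ monic of degree $d$ and $p(0)\in\OO_K^\times$, and $u=\varphi(U)$ satisfying $\overline u=\overline{f_{n+d}}\,X^n$ in $k\dcroc{X}$.

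It would then remain to identify $p$ with the slope $0$ factor of $F$. The factorization $F=pu$ meets exactly the conditions of Hensel's factorization theorem over the complete ring $\OO_K$ recalled at the start of this section — a monic factor of degree $d$ with unit constant term, times a factor whose reduction modulo $\MM_K$ is the monomial $\overline{f_{n+d}}X^n$ — and that factorization is unique. Consequently $p$ is the factor that the theory of Newton polygons attaches to the $d=\mu_{\max}(F)-\mu_{\min}(F)$ roots of $F$ on the unit sphere, i.e. the slope $0$ factor. Its coefficients would be $\varphi(P_0),\hdots,\varphi(P_{d-1})$, obtained by substituting $f_0,\hdots,f_N$ into the $P_j$; this substitution depends only on $\mu_{\min}(F)$ and $\mu_{\max}(F)$ (through the ring $S_{n,d}$) and on $\deg(F)$ (through the truncation $F_i\mapsto0$ for $i>N$), which is the asserted universal formula.

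I expect this last identification to be the main obstacle: everything hinges on specialization commuting with Hensel factorization, which is exactly the uniqueness statement. The one point that needs care is checking that $\varphi(U)$ really satisfies the hypotheses of that uniqueness statement — that its reduction is the monomial $\overline{f_{n+d}}X^n$ and that it is compared inside the power series ring where uniqueness is asserted — while the legitimacy of the substitution, and with it the convergence of $\varphi$ on the large ring $S_{n,d}$, is precisely what is guaranteed by $f_0,\hdots,f_{n-1}$ and $f_{n+d+1},\hdots$ all lying in $\MM_K$.
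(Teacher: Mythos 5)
Your proposal is correct and takes essentially the same route as the paper: specialize the universal factorization of Theorem~\ref{hensuniv} at the coefficients of $F$, then identify the resulting monic factor with the slope $0$ factor via the uniqueness of the Hensel factorization (the paper phrases this identification as: $F_{\neq 0}$ has no roots in the unit sphere, so $P=F_0$ and $U=F_{\neq 0}$). Your extra verifications --- that the specialization homomorphism $S_{n,d}\to\OO_K$ is well defined on the large power series ring, and that $\overline{F_{\neq 0}}$ is the monomial $\overline{f_{n+d}}X^n$ so that the uniqueness statement applies --- merely spell out what the paper leaves implicit.
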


\begin{proof}
Let $F = F_0 F_{\neq 0}$ be the factorization of $F$ as the product of a monic polynomial of slope $0$ and of a polynomial of slopes $\neq 0$. The polynomial $F_{\neq 0}$ has no roots in the unit sphere, so that if we view $F$ as an element of $\OO_K\{X\}$, then $P=F_0$ and $U=F_{\neq 0}$.
\end{proof}

Theorem \ref{hensuniv} can also be used, as in \S\ref{resulsec}, to produce resultant power series $\Res_{n,d}$, that will detect whether two restricted power series $f$ and $g$, with $\mu_{\min}(f)=n$ and $\mu_{\max}(f)=n+d$, have roots in common in the unit sphere.

\providecommand{\og}{``}
\providecommand{\fg}{''}

%\bibliographystyle{smfalpha}
%\bibliography{RESUL}

\begin{thebibliography}{BGW14}

\bibitem[Alg]{ALG}
{\scshape N.~Bourbaki} -- \emph{\'{E}l\'{e}ments de math\'{e}matique. 
{A}lg\`ebre.}
Springer, 2006.

\bibitem[AC]{AC}
{\scshape N.~Bourbaki} -- \emph{\'{E}l\'{e}ments de math\'{e}matique. 
{A}lg\`ebre commutative.}
Springer, 2006.

\bibitem[BGW14]{BGW14}
{\scshape D.~Birmajer, J.~B. Gil {\normalfont \&} M.~D. Weiner} --
  {\og On {H}ensel's roots and a factorization formula in
  {$\mathbb{Z}[[X]]$}\fg}, \emph{Integers} \textbf{14} (2014), Paper No.
  A47, 18pp.

\bibitem[Ell14]{JE14}
{\scshape J.~Elliott} -- {\og Factoring formal power series over principal
  ideal domains\fg}, \emph{Trans. Amer. Math. Soc.} \textbf{366} (2014), no.~8,
  p.~3997--4019.

\bibitem[Lub95]{JL95}
{\scshape J.~Lubin} -- {\og Sen's theorem on iteration of power series\fg},
  \emph{Proc. Amer. Math. Soc.} \textbf{123} (1995), no.~1, p.~63--66.

\bibitem[O'M72]{OM72}
{\scshape M.~O'Malley} -- {\og On the {W}eierstrass preparation theorem\fg},
  \emph{Rocky Mountain J. Math.} \textbf{2} (1972), no.~2, p.~265--273.

\bibitem[Sen69]{S69}
{\scshape S.~Sen} -- {\og On automorphisms of local fields\fg}, \emph{Ann. of
  Math. (2)} \textbf{90} (1969), p.~33--46.

\bibitem[Sta18]{SP}
{\scshape The stacks project authors} -- {\og Stacks project\fg}, 2018.

\end{thebibliography}
\end{document}